\documentclass{amsart}
\usepackage{amsfonts,amssymb,mathrsfs,enumerate,enumitem,amsthm}
\usepackage{hyperref}
\usepackage{amsrefs,theoremref}
\theoremstyle{plain}
\newtheorem{theorem}[subsection]{Theorem}
\newtheorem{lemma}[subsection]{Lemma}
\newtheorem{prop}[subsection]{Proposition}

\theoremstyle{remark}

\newcommand{\abs}[1]{\left\lvert#1\right\rvert}
\newcommand{\bS}{\mathbb{S}}
\newcommand{\m}{\mathfrak{m}}

\newcommand{\norm}[1]{\left\lVert#1\right\rVert}
\DeclareMathOperator{\tr}{tr}

\begin{document}

        \author{Albert Chau$^1$}
        \address{Department of Mathematics,
                The University of British Columbia, Room 121, 1984 Mathematics
                Road, Vancouver, B.C., Canada V6T 1Z2} \email{chau@math.ubc.ca}

        \thanks{$^1$Research
                partially supported by NSERC grant no. \#327637-06}

        \author[L. Han]{Luke Kuo Han}
        \address{Department of Mathematics, The University of British Columbia,
1984 Mathematics Road, Vancouver, B.C.,  Canada V6T 1Z2}
\email{lukehan@math.ubc.ca}

\title{Bartnik Mass of surfaces under a Spectral non-negativity condition}
\maketitle

\pagestyle{plain}

\vspace{-3em}

\maketitle

\begin{abstract}
Let \(g\) be a smooth Riemannian metric and $H$ a positive function on
$\mathbb{S}^2$.  We prove that the Bartnik mass of the triple \((\bS^2,g,H)\)
is bounded above by $\sqrt{\abs{\mathbb{S}^2}_g/16\pi}$ provided the first
eigenvalue  $\lambda_1(g)$ of the operator $(-\Delta_g+K_g)$ is non-negative.
We prove a similar result assuming $H$ is only nonnegative and $\lambda_1(g)>0$.
These eigenvalue conditions, in particular, impose no lower bound on $K_g$
(even under an area constraint \cite{Mantoulidis_2015} \S3) and thereby extend
previous results which assume $K_g\geq 0$.

\end{abstract}

\section{Introduction}

Let \(g\) be a Riemannian metric and \(H\) be a function on \(\bS^2\).   A
classical problem in Riemannian general relativity and quasi-local mass theory
is the construction of admissible extensions of the Bartnik boundary data \(
(\bS^2,g,H)\).
Here an \begin{it}admissible extension\end{it} is a smooth asymptotically flat
3-manifold $(M, \gamma)$ such that
\begin{enumerate}[label=(\alph*)]
   \item \(M\) has non-negative scalar curvature,
   \item \(\partial M\) is diffeomorphic to $\bS^2$ and induces the metric $g$,
   \item \(\partial M\) has mean curvature $H$ (relative to the outward
normal),
   \item the interior of $M$ contains no closed minimal surfaces
\end{enumerate}
The ADM mass $\m_{ADM}(M,\gamma)$ of  each such admissible extension is well
defined \cite{ADM} and following  \cite{bartnik1989} the Bartnik mass of the
boundary data is defined to be

\begin{equation}\label{Bartnikmass}
\m_B(\bS^2,g,H)=\inf  \left\{\m_{ADM}(M,\gamma): \text{$(M,\gamma)$ an
admissible extension of $(\bS^2,g,H)$} \right\}
\end{equation}

There has been extensive work on constructing admissible extensions of given
Bartnik data \((\bS^2,g,H)\) under various conditions on $g$ and $H$ and estimating \(\m_B\).  We refer to \cite{Cabrera2017,
CabreraCederbaum2019,
chau-martens2021,
LS,
Mantoulidis_2015,
McCormick_2020,
McCormick_2024,
Miao_2009,
Miao_P_2024,
miao2019hawkingmassbartnikmass,
miao-xie,
shi-tam} and references therein.   We briefly describe some of the above cited works by
distinguishing two cases.

 The first case is when $H=0$ which corresponds to a minimal surface within
each admissible extension.   This case leads naturally to consider
\begin{equation}
\lambda_1(g):=  \text{ first eigenvalue of the operator $(-\Delta_g+K_g)$ on
$\bS^2$ }
\end{equation}
The condition $\lambda_1(g)\geq 0$ corresponds to the stability of the
minimal boundary sphere within a given admissible extension giving rise to a so
called  \textit{apparent horizon}  (see \cite{Mantoulidis_2015},
\cite{chau-martens2021} and references therein).
It was proved in \cite{Mantoulidis_2015} that
\begin{equation}\label{M-S bound}
    \m_B(\bS^2,g,H)\leq\sqrt{\abs{\bS^2}_g/16\pi}
\end{equation}
provided $H=0$ and $\lambda_1(g) > 0$, and this result was extended to include
the
degenerate case   $\lambda_1(g)= 0$ in  \cite{chau-martens2021}.   Here
$\abs{\bS^2}_g$ refers to the area of $\bS^2$ relative to $g$.  On the other
hand, the Riemann Penrose
inequality provides $\m_{ADM}(M,\gamma)  \geq \sqrt{\abs{\bS^2}_g/16\pi}$ for
every admissible extension and thus  equality actually holds in \eqref{M-S
bound} for these cases.   The proof in \cite{Mantoulidis_2015} introduced the
method of collar
constructions which consists of constructing a suitable metric $\gamma$ on
the collar $\bS^2\times[0, 1]$ and smoothly attaching, on one end, the given
Bartnik data and, on the other, an exterior Schwarzschild region with
controlled mass.   The collar construction method has since become a central
technique for estimating the Bartnik mass, and is used in the present work as
well as in many of the works cited above.

 The second case is when $H>0$ (see  \cite{Cabrera2017,
CabreraCederbaum2019,
chau-martens2021,
LS,
McCormick_2020,
McCormick_2024,
Miao_P_2024,
miao2019hawkingmassbartnikmass,
miao-xie,
shi-tam} and references therein).   In particular, it was shown in
\cite{Miao_P_2024}  that when $K_g\geq 0$ and $H$ is a positive function then
one has
\begin{equation}\label{M-SS bound}
    \m_B(\bS^2,g,H)\leq C(g, H) \sqrt{\abs{\bS^2}_g/16\pi}
\end{equation}
where $C(g, H)$ is a constant depending on $g, H$.  The same result was
established in \cite{chau-martens2021} in the case $H$ is constant, but there the
constant $C(g, H)$ was different and further satisfied $C(g, H)\leq 1$ (similar
bounds were also obtained with no assumption on $K_g$, but
only assuming $H$ is sufficiently large depending on $g$).  The assumption of
nonnegative Gauss curvature, $K_g\geq 0$, is common to the main results in all
of the works cited in this case.

 In this work, we extend the above results by relaxing the assumption $K_g\geq
0$ to assuming only that $\lambda_1(g)\geq 0$ while allowing $H$ to be any
positive function. Again, this is a strictly weaker assumption than \(K_g\geq
0\) (\cite{Mantoulidis_2015}\S3). We prove

\begin{theorem}\thlabel{mainthm}
    Let $g$ be a metric and \(H\) a positive function on
\(\bS^2\) such that $\lambda_1(g)\geq 0$.

 Then for any
\(m>\sqrt{\abs{\bS^2}_g/16\pi}\) there exists an admissible extension of
\((\bS^2,g,H)\) which is foliated by mean convex spheres and is  isometric to an exterior region of a mass-\(m\)
Schwarzschild manifold outside of a compact set.  In particular one has
\begin{equation*}
    \m_B(\bS^2,g,H)\leq \sqrt{\abs{\bS^2}_g/16\pi}
\end{equation*}
\end{theorem}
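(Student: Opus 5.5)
We will use a collar construction in the spirit of \cite{Mantoulidis_2015} and \cite{chau-martens2021}. We build a metric on $\bS^2\times[0,1]$ whose inner boundary $\bS^2\times\{0\}$ carries the data $(g,H)$, whose slices are mean convex, and whose outer boundary matches a round sphere in a mass-$m$ Schwarzschild exterior. We then glue on that exterior.

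\textbf{Step 1: a path of metrics with $\lambda_1>0$ ending at a round metric.} We take the path $g(t)$, $t\in[0,1]$, from \cite{Mantoulidis_2015} (as refined in \cite{chau-martens2021} for the degenerate case $\lambda_1(g)=0$). It starts at $g(0)=g$, ends at a round metric $g(1)$, keeps area fixed, and has $\lambda_1(g(t))>0$ for $t>0$. Its first variation is trace-free, $\tr_{g(t)} g'(t)=0$. Concretely, we uniformize $g=e^{2w}g_*$ and interpolate the conformal factor. Positivity of $\lambda_1$ gives a positive first eigenfunction $u_t$ with $-\Delta_{g(t)}u_t+K_{g(t)}u_t=\lambda_1(t)u_t$, and we normalize $u_t$ smoothly in $t$.

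\textbf{Step 2: the collar metric.} We consider a metric of the form
\[
\gamma=A^2 v(x,t)^2\,dt^2+r(t)^2 g(t),\qquad r(0)=1,\ r'\ge0,
\]
with $v>0$. The slice mean curvature is
\[
H_t=\frac{1}{Av}\Big(\frac{2r'}{r}\Big).
\]
The trace-free part of $g'$ does not contribute to $H_t$. We choose $v(\cdot,0)=2r'(0)/(AH)$, so that $H_0=H>0$; the problem is then reduced to arranging nonnegative scalar curvature. We use the Gauss equation in the form
\[
R_\gamma=2K_{g(t)}r^{-2}-2\frac{\Delta_{r^2g(t)}v}{v}-\Big(|A_t|^2+H_t^2\Big)-\frac{2}{Av}\partial_tH_t.
\]
Away from $t=0$ we take $v$ proportional to the eigenfunction $u_t$. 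Then
\[
2r^{-2}\big(K-\Delta v/v\big)=2r^{-2}\lambda_1(t)\ge0,
\]
which is the key point where $\lambda_1\ge0$ replaces $K_g\ge0$. The remaining terms are controlled by taking $r'$ small relative to $A$, and by making $H_t$ decrease slowly. The trace-free shear $|g'|^2/A^2v^2$ is made small by taking $A$ large. Near $t=0$, where $v$ must equal $2r'/(AH)$ rather than $u_0$, we first insert a short preliminary collar on which the metric is fixed, $g(t)=g$, and only $v$ is deformed. On this piece we interpolate $v$ from $c/H$ to $c\,u_0$ while keeping $H_t>0$, and we absorb the scalar curvature error using the convexity term $-2\partial_tH_t/(Av)>0$ by letting $H_t$ decrease (for example $r$ growing like $e^{\epsilon t}$ with the factor $A$ chosen appropriately). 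If $\lambda_1(g)=0$, the leftover term $-\Delta v/v+K$ is not sign-controlled at $t=0$. We handle this by first perturbing slightly along the path into $\lambda_1>0$, following \cite{chau-martens2021}.

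\textbf{Step 3: gluing to Schwarzschild.} At $t=1$ the slice is round, with area $r(1)^2|\bS^2|_g$, and its mean curvature is constant once $v(\cdot,1)$ is constant. This holds because $u_1$ is constant on the round sphere. We match this slice to a coordinate sphere in a Schwarzschild manifold of mass $m$, in the manner of \cite{Mantoulidis_2015}: we use a rotationally symmetric bridge with $R\ge0$ satisfying $m>\sqrt{r(1)^2|\bS^2|_g/16\pi}$. Since $r(1)$ can be taken arbitrarily close to $1$, any $m>\sqrt{|\bS^2|_g/16\pi}$ is achievable. The smoothing at the junctions follows Miao's corner smoothing or the explicit matching of \cite{Mantoulidis_2015}. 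Because all slices are mean convex, the extension contains no closed minimal surfaces, by the maximum principle applied to the foliation.

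\textbf{Main obstacle.} We expect the main difficulty to be the initial segment near $t=0$. There the prescribed nonconstant $H$ forces $v\propto1/H$, which is incompatible with $v\propto u_0$, so the sign of $K-\Delta v/v$ is lost. The whole construction depends on showing that a $\partial_tH_t$ term (the mean curvature decreasing) can dominate this loss while mean convexity is preserved. We would attempt this first with the ansatz
\[
v(x,t)=\big(1-\phi(t)\big)\frac{c}{H}+\phi(t)\,c\,u_0,
\]
together with an $A$ large enough to make the Gauss curvature terms negligible relative to the $H_t$-terms.
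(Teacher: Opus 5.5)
Your proposal leaves the decisive estimate unproved. You flag the region near $t=0$ as the main obstacle and offer only an ansatz to try there, and the mechanisms you do name fail in the degenerate case $\lambda_1(g)=0$, which the theorem includes.

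With $g$ frozen and $f=(\log r)'$, your collar has
\[
R=\frac{2}{r^{2}}\,\frac{K_g v-\Delta_g v}{v}+\frac{1}{A^2v^2}\Big(-4f'-6f^2+4f\,\frac{v_t}{v}\Big).
\]
When $\lambda_1(g)>0$, your lapse interpolation can probably be completed. You would normalize $u_0$ so that $v$ increases in $t$, make $\phi'$ large so that $4f\,v_t/v$ beats the sign-indefinite $O(1)$ Gauss term, and let $\lambda_1(g)>0$ take over near $\phi=1$.

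The degenerate case is different. At the point where $\phi$ reaches $1$ you have $v=cu_0$ and $v_t=0$, so the Gauss term is exactly $2r^{-2}\lambda_1(g)$. If $\lambda_1(g)=0$ and $r=e^{\epsilon t}$ as you suggest, this gives $R=-6\epsilon^2/(A^2c^2u_0^2)<0$. Once $g(t)$ starts to move, the Gauss term is only $\ge 2r^{-2}\alpha t$, which vanishes at the junction. Meanwhile the bracket picks up $-\tfrac14\abs{g'}^2_g$ and $4f\,u_t/u$, which in general do not vanish there. Enlarging $A$ shrinks the bad region but never removes it.

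``Perturbing slightly along the path into $\lambda_1>0$'' is not a remedy. The boundary metric is fixed, and the initial segment of the path is exactly where $\lambda(t)$ degenerates. Note also that the loss of sign at $t=0$ comes from $v\propto 1/H$ and has nothing to do with $\lambda_1(g)$.

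The missing idea is a positive term that survives at the junction. The paper supplies one and also removes your lapse/eigenfunction conflict altogether. It keeps the lapse $\Phi(t)u(x,t)$ proportional to the eigenfunction from $t=0$ on. It puts the $x$-dependence of $H$ into the conformal factor $\eta=1+\varepsilon uH\psi(t)+\varepsilon^{1/2}t^2$, with $\Phi(0)=\varepsilon$. Then $H(x,0)=\eta_t/(\Phi u)=H$, and the Gauss term is $2\eta^{-1}(\lambda(t)-\tfrac12\Delta_g\log\eta)\ge 2\eta^{-1}(\alpha t-C\varepsilon t)$.

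Near $t=0$ the term $\frac{\Phi_t}{\Phi}\tr_h h_t=\frac{\Phi_t}{\Phi}\frac{2\eta_t}{\eta}$ does the work:
\begin{itemize}
\item positivity of $H$ gives $\eta_t\ge C\varepsilon$;
\item $\Phi=\varepsilon^{-5/2}t+\varepsilon$ gives $\Phi_t/\Phi\ge\tfrac12\varepsilon^{-3/2}$ on $[0,\varepsilon^{3/2}]$.
\end{itemize}
So this term is at least of order $\varepsilon^{-1/2}$ there and dominates the $O(1)$ errors. For $t\ge\varepsilon^{3/2}$, the lapse is of order $\varepsilon^{-1}$ or larger, which makes the bracket negligible and lets $\alpha t$ win.

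To complete your route you would need an analogous rapidly growing lapse factor multiplying $f>0$ after the interpolation, together with the corresponding estimates.
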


\par An exterior region of a mass-\(m\) Schwarzschild manifold here refers to
$$(\bS^2 \times (c,\infty),  r^2 g_* + \frac{1}{1 - \frac{2m}{r}}\, dr^2)$$
for $c>0$ where $g_*$ is the standard round metric on $\bS^2$.  The ADM mass of
this metric is equal to $m$ \cite{ADM} from which the last statement in Theorem
\ref{mainthm} immediately follows.

In Theorem \ref{secondthm} below we obtain a similar extension when \(H\)  is merely nonnegative and $\lambda_1(g)>0$, although here we must replacing condition (d) in the definition of admissibility with the condition (d)':  \(\partial M\) is outer-minimizing.     Corresponding to this replacement, we may define a Bartnik mass $\m_B' (\bS^2,g,H)$ exactly as in \eqref{Bartnikmass} and we refer to \cite{Jauregui2019} (Theorem 16) for a comparision between $\m_B(\bS^2,g,H)$ and $\m_B' (\bS^2,g,H)$ in particular when $\lambda_1(g)>0$.   We note that both conditions (d) and (d)' are satisfied when the extension is foliated by mean convex spheres as in Theorem \ref{mainthm} (d2) (see \cite{Cabrera2017}, Remark 1.1).

.

\begin{theorem}\thlabel{secondthm}
   Let $g$ be a metric and \(H\) a \textit{nonnegative} function on
\(\bS^2\) such that $\lambda_1(g)>0$.

 Then for any
\(m>\sqrt{\abs{\bS^2}_g/16\pi}\) there exists extension  of
\((\bS^2,g,H)\) which is admissible in the sense of condition (d)' and  which is isometric
to an exterior region of a mass-\(m\) Schwarzschild manifold outside of a
compact set.  In particular one has
\begin{equation*}
    \m'_B(\bS^2,g,H)\leq \sqrt{\abs{\bS^2}_g/16\pi}
\end{equation*}
\end{theorem}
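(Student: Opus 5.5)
Since $\lambda_1(g)>0$ is an open condition and Theorem \ref{mainthm} already treats positive $H$, the plan is to reduce to a collar construction of Mantoulidis--Schoen type, modified at the inner end so that it attaches to data with mean curvature $H\ge 0$ rather than $H=0$.

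\textbf{Step 1: path of metrics.} Since $\lambda_1(g)>0$, the construction of \cite{Mantoulidis_2015} gives a smooth path $g(t)$, $t\in[0,1]$, with $g(0)=g$, $g(1)$ round, $\lambda_1(g(t))>0$, and $\tr_{g(t)}g'(t)=0$, so the area stays $A=\abs{\bS^2}_g$. Set $r_0=\sqrt{A/4\pi}$.

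\textbf{Step 2: collar metric.} On $\bS^2\times[0,1]$ take
\[
\gamma=u(x,t)^2dt^2+(1+\epsilon t^2)\,g(t),
\]
and choose $u$ so that two things hold. At $t=0$ the mean curvature $\frac{1}{2u}\tr_g\partial_t(g(t)(1+\epsilon t^2))$ must equal $H$. By trace-freeness this quantity vanishes at $t=0$, so the boundary would be minimal. To fix this I would use the conformal factor $(1+\epsilon t^2+\delta\phi(x)\,t)$ in place of $(1+\epsilon t^2)$, choosing $\phi$ to be a positive multiple of $H\,u(\cdot,0)$. More simply, I would take $u$ large and conformal factor $(1+\beta(x)t+\epsilon t^2)$ with $\beta=H u(\cdot,0)\ge0$. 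Where $H=0$ one gets $\beta=0$, which is allowed.

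The second requirement is scalar curvature $R_\gamma\ge0$. The scalar curvature formula gives
\[
R_\gamma = 2K_{\tilde g(t)} - \frac{2\Delta_{\tilde g}u}{u} - \frac{1}{u^2}(\text{terms in }\partial_t\tilde g).
\]
Taking $u=a\,v(t)$, where $v(t)>0$ is the first eigenfunction of $-\Delta_{g(t)}+K_{g(t)}$ and $a$ is large, the first two terms give $2\lambda_1(t)>0$ up to conformal corrections of order $\beta,\epsilon$. The $t$-derivative terms are $O(a^{-2})$. So for $a$ large and $\epsilon,\delta$ small, $R_\gamma>0$. The boundary mean curvature is $\beta/u=H$ exactly by the choice of $\beta$, and $\beta$ may be scaled with $a$ by replacing $t$ with $t/a$.

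\textbf{Step 3: gluing.} At $t=1$ the slice is round with area $A(1+\beta+\epsilon)$, up to making $\beta$ nonconstant. I would first insert a further segment in which $g$ stays round and the factor flattens out, so that the end slice is an exact round sphere of radius $r_1$ slightly larger than $r_0$, with constant small positive mean curvature. Then, exactly as in \cite{Mantoulidis_2015} and \cite{chau-martens2021}, I would glue to Schwarzschild of mass $m$ for any $m>r_0/2$ by choosing $\epsilon,\delta$ small so that $r_1<2m$-compatible matching holds, and use the rotationally symmetric bending plus smoothing (Miao) to get a smooth $R\ge0$ metric. Finally, outer minimization of $\partial M$ follows if the whole extension is foliated by spheres of mean curvature $\ge0$ that are strictly positive for $t>0$. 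This holds since the area factor increases in $t$, and a maximum principle/calibration argument then gives condition (d)'. The mass is $m$, and letting $m\downarrow r_0/2$ gives the bound.

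\textbf{Main obstacle.} The hardest step is Step 2: obtaining the prescribed nonnegative, possibly vanishing, $H$ at $t=0$ while keeping $R\ge0$ with only $\lambda_1>0$ and no sign on $K_g$. The $x$-dependent conformal factor $\beta(x)t$ introduces Laplacian terms $t\Delta\beta$. These vanish at $t=0$ and are controlled after the rescaling $t\mapsto t/a$. Here strict positivity $\lambda_1>0$ is essential, since $\lambda_1=0$ would not absorb the error terms.
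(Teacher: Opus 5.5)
\textbf{There is a genuine gap in Step 2.} It also undermines Step 3 and your outer-minimizing claim.

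\textbf{Why Step 2 fails.} With lapse $u=a\,v$ and $a$ large, matching the boundary mean curvature forces $\eta_t(\cdot,0)=\beta=aHv(\cdot,0)$. So $\beta$ is of size $a$, not a small correction. Reparametrizing $t\mapsto t/a$ produces an isometric metric and cannot change this. If the term $\beta(x)t$ is kept on the whole collar, the spatial part of the scalar curvature, $2\eta^{-1}\bigl(\lambda_1(g(t))-\tfrac12\Delta_{g(t)}\log\eta\bigr)$, is no longer controlled by $\lambda_1$:
\begin{itemize}
\item At a nondegenerate zero $x_0$ of $H$ you have $\nabla\beta(x_0)=0$ and $\Delta_{g(t)}\log\eta(x_0)=t\,\Delta_{g(t)}\beta(x_0)/\eta(x_0)$, which is positive and of order $at$. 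The spatial term therefore becomes very negative once $t\gg1/a$.
\item At that point the $u^{-2}$-terms are only $O(a^{-2})$, because $\eta_t(x_0,t)=2\epsilon t$, so nothing compensates.
\item Even if $H>0$, for $at\gg1$ you get $\Delta_{g(t)}\log\eta\approx\Delta_{g(t)}\log(Hv(\cdot,0))$, which is of order one and has no sign.
\end{itemize}
Step 3 fails for a related reason. The slice at $t=1$ is $(1+\beta+\epsilon)g(1)$, which is not round and has area of order $a\abs{\bS^2}_g$. Its Hawking mass is therefore far above $\sqrt{\abs{\bS^2}_g/16\pi}$. An unspecified ``flattening segment'' does not repair this while keeping $R\ge0$.

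\textbf{The missing idea: cut off the linear term.} The perturbation must have the large slope dictated by the lapse at $t=0$, but uniformly tiny amplitude. The paper uses lapse $\varepsilon^{-1}u$ and
$\eta=1+\varepsilon^{-1}u(x,0)H(x)\phi(t)+\varepsilon^{3/2}t^2$, where $\phi(0)=0$, $\phi'(0)=1$, $0\le\phi\le\varepsilon^2$, $\abs{\phi'}<3$, $\phi''<1$, and $\phi\equiv0$ on $[1/2,1]$ (constructed in the Appendix). The estimates then go through:
\begin{itemize}
\item Since $\varepsilon^{-1}\phi=O(\varepsilon)$, you get $\Delta_g\log\eta=O(\varepsilon)$.
\item The bracket of $t$-derivatives is $\ge-C\varepsilon^{-1}$. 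Only the one-sided bound $\phi''<1$ is needed, because the very negative $\phi''$ near $0$ has the favorable sign.
\item Multiplying by $v^{-2}=\varepsilon^2u^{-2}$ turns that bracket into $O(\varepsilon)$, so $R\ge2\eta^{-1}\lambda-C\varepsilon>0$.
\item The end slice is round with area tending to $\abs{\bS^2}_g$.
\end{itemize}

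\textbf{Consequence for outer-minimizing.} Since $\phi$ must come back down, the $\phi$-part of $\eta$ decreases on part of the collar. Mean convexity of the collar is then no longer automatic, so your argument that ``the area factor increases in $t$'' is unavailable. The paper does not claim a mean-convex collar. It proves outer-minimization by calibrating with $\omega=\pi^*dA_{g(0)}$, whose comass is $\eta^{-1}\le1$ because the path preserves the area form and $\eta\ge1$ (as $H,\phi\ge0$). This is combined with mean convexity near $t=1$ and in the glued Schwarzschild region (McCormick, Theorem 4.1).
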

In particular, if \(H\equiv 0\), \thref{secondthm} recovers
\cite{Mantoulidis_2015}.

We prove Theorem \ref{mainthm} and \thref{secondthm} following the collar
construction method from \cite{Mantoulidis_2015}.  In \S2 and \S3 we suitably
extend the Bartnik data to the collar \(\bS^2\times[0,1]\) under the respective
conditions of the theorems.  Our collar extension combines the ansatz used in
\cite{Mantoulidis_2015} together with the one used in \cite{chau-martens2021}
(see proof of Theorem 5.1).
Then in \S4 we invoke \cite[Proposition 2.1]{Cabrera2017} to extend our collar
metrics to an exterior Schwarzschild region with suitably controlled mass, and
thereby conclude the proof of \thref{mainthm} and \thref{secondthm}.

\section{Collar Extension of Theorem 1.1}

Fix Bartnik boundary data \((\bS^2,g,H)\) as in Theorem \ref{mainthm}.    We
will proceed to construct a suitable metric on \(\bS^2\times[0,1]\) extending
our boundary data.
Our construction will combine the collar ansatz used in \cite{Mantoulidis_2015}
together with the one used in \cite{chau-martens2021} (see proof of Theorem
5.1).
We recall the following lemma which is contained in \cite[Lemma
1.2]{Mantoulidis_2015} except the (v) \(\lambda_1(g)=0\) case appearing in
\cite[Lemma 2.1]{chau-martens2021}.

\begin{lemma}[\cite{Mantoulidis_2015} Lemma 1.2;   \cite{chau-martens2021}
Lemma 2.1]\thlabel{path lemma}
    There exists a smooth path of metrics $g(t)$ on $\bS^2$ for \(t\in[0,1]\)
such that
    \begin{enumerate}[label=\rm(\roman*)]
        \item \(g(0)=g\),
        \item \(g(1)\) is round.
        \item \(\dot g\equiv 0\) for \(t\in[1/2,1]\), and
        \item \(\frac{d}{dt}dA_g\equiv 0\) for \(t\in[0,1]\) where \(dA_g\) is
the area form of the metric \(g\).
\item If $\lambda_1(g)= 0$ then \(\lambda(t):=\lambda_1(g(t))\geq \alpha t\)
for some $\alpha>0$ and all $t\in [0, 1]$.
\\If \(\lambda_1(g)>0\), then \(\lambda(t)\geq\lambda\) for some \(\lambda>0\)
for all \(t\in[0,1]\).

\item There exists a smooth path of positive eigenfunctions
\(u(t,\cdot)>0\) on $\bS^2$ corresponding to \(\lambda(t)\).
    \end{enumerate}

\end{lemma}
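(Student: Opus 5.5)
We follow the Mantoulidis--Schoen construction: first deform $g$ to a round metric inside the class of metrics with fixed area form, then make the eigenvalue path smooth, and handle the degenerate case $\lambda_1(g)=0$ with a first-order perturbation. The first step is to build a preliminary path $h(t)$, $t\in[0,1/2]$, with $h(0)=g$ and $h(1/2)$ round. By uniformization, $g=e^{2w}g_0$ with $g_0$ round and $|\bS^2|_{g_0}=|\bS^2|_g$. Take the conformal path $e^{2(1-2t)w}g_0$. It does not preserve area, so correct it by a Moser-type argument. Fix the area form $dA_g$ and let $\phi_t$ be diffeomorphisms pulling the area form of the conformal path back to a fixed multiple of $dA_g$. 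The pullbacks $h(t)$ then satisfy (iv), possibly after an overall scaling of the area, which is constant because the total area is fixed. Set $g(t)\equiv h(1/2)$ for $t\in[1/2,1]$ and reparametrize near $t=1/2$ so the path is smooth and satisfies (iii). Items (i)--(iii) then hold.

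For (v), use that on a sphere with $\int K=4\pi$ and fixed area, the eigenvalue $\lambda_1(-\Delta+K)$ relates to $\lambda_1(-\Delta_{g}+K_{g})$ along the conformal direction. Writing $g_s=e^{2sw}g_0$ with $s=1-2t$, the Rayleigh quotient satisfies
\[
\int|\nabla f|^2_{g_s}\,dA_{g_s}=\int|\nabla f|^2_{g_0}\,dA_{g_0},\qquad K_{g_s}\,dA_{g_s}=(K_{g_0}-s\Delta_{g_0}w)\,dA_{g_0}.
\]
So $\int(|\nabla f|^2+K_{g_s}f^2)\,dA_{g_s}=(1-s)Q_{g_0}(f)+sQ_g(f)$ is affine in $s$. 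Both $Q_{g_0}$ and $Q_g$ are nonnegative, and $Q_{g_0}$ is positive definite. The denominator $\int f^2e^{2sw}\,dA_{g_0}$ is comparable to $\int f^2\,dA_{g_0}$. Hence $\lambda_1(g_s)\ge c(1-s)$, which is linear in $t$. When $\lambda_1(g)>0$ this also gives a uniform positive lower bound. The key point is that $\lambda_1$ is invariant under pulling back by diffeomorphisms, so the Moser correction does not affect (v). Reparametrizing $t$ near $0$ keeps a linear lower bound $\alpha t$, provided the reparametrization has positive derivative at $0$.

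For (vi), note that $\lambda_1$ of $-\Delta+K$ on a connected surface is simple with a positive eigenfunction. Analytic perturbation theory for a smooth family of self-adjoint elliptic operators with a simple eigenvalue then gives a smooth $\lambda(t)$ and a smooth eigenfunction branch, normalized by $\int u^2\,dA=1$ and $u>0$.

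The main obstacle is smoothness at the junction $t=1/2$ combined with (iii). We handle it by composing with a smooth monotone reparametrization that is flat at $t=1/2$. We must also check that the Moser diffeomorphisms depend smoothly on $t$. This follows from solving $\Delta_{g}\psi_t=\partial_t(\text{density})$ at each $t$ and integrating the vector field $\nabla\psi_t/\text{density}$.
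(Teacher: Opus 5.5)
The paper does not reprove this lemma; it cites Mantoulidis--Schoen (Lemma 1.2) and Chau--Martens (Lemma 2.1). Your route is essentially the standard argument behind those references: uniformization, the conformal path $e^{2sw}g_0$, the identity showing that the quadratic form $(1-s)Q_{g_0}+sQ_g$ is affine in $s$, Moser's trick, and a flat reparametrization at $t=1/2$. The affine identity gives the uniform bound when $\lambda_1(g)>0$ and the linear bound $\lambda_1\gtrsim 1-s=2t$ in the degenerate case.

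One step needs correcting: the total area is \emph{not} constant along $e^{2sw}g_0$. The function $A(s)=\int e^{2sw}\,dA_{g_0}$ is convex in $s$ with $A(0)=A(1)$, so $A(s)<A(1)$ for $0<s<1$ unless $w\equiv 0$. Moser's trick therefore cannot pull $dA_{g_s}$ back to a fixed multiple of $dA_g$, and no single overall scaling repairs this.

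The fix is to apply Moser to $c(s)g_s$ with the $s$-dependent factor $c(s)=A(1)/A(s)$. Since $\lambda_1(cg)=c^{-1}\lambda_1(g)$ and $c(s)$ is smooth with $e^{-2\max\abs{w}}\le c(s)\le e^{2\max\abs{w}}$, both bounds in (v) survive with a changed constant.

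With that fix the proof is complete. Note also that (iv) makes every $-\Delta_{g(t)}+K_{g(t)}$ self-adjoint on the fixed space $L^2(dA_g)$, which is what lets the simple-eigenvalue perturbation argument for (vi) apply directly.
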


Now we fix
\begin{itemize}
\item A path \(g(x, t)\) from Lemma \ref{path lemma}
\item A smooth  $\psi:[0,1]\to[0,1]$ : $\psi(t)=t$ for $t$ near $0$ and
$\psi(t)=0$ for $t$ near $1$.
\item A smooth  non-decreasing $\varphi:[0,\infty)\to[0,1]$ : $\varphi(t)=t$
for $t$ near $0$ and $\varphi(t)=1$ for $t\ge 1$.
\end{itemize}

For each $0<\varepsilon <(1/4)^{2/5}$ consider the smooth metric on
\(\bS^2\times[0,1]\) given by

\begin{equation}\label{gamma}
\gamma_{\varepsilon} =  \eta(x,t)\,g(x, t) + \Phi^2(t)u^2(x, t)\,dt^2
\end{equation}

where

\[\eta(x, t)=1+\varepsilon u(x, t) H(x) \psi(t)
+\varepsilon^{1/2} t^2\]

\[
\Phi(t)=
\begin{cases}
At + \varepsilon\,
& t\le \dfrac14, \\[1ex]
\varphi(A(t-1/4)) + \dfrac{A}{4} + \varepsilon\,
& \dfrac14 \le t \le \dfrac12, \\[1ex]
\dfrac{A}{4}+\varepsilon+1,
& \dfrac12 \le t.
\end{cases}\quad\quad\quad(A = \varepsilon^{-5/2})
\]

The metric \(\gamma_\varepsilon\) is in large the ansatz in
\cite{Mantoulidis_2015} with a bump linear term so that it produces the desired
boundary mean curvature while retaining the same gluing conditions at the end
of the collar.

\begin{lemma}\thlabel{collar lemma 1}
  There exists $0<\varepsilon_0<(1/4)^{2/5}$ such that for
all  $\varepsilon<\varepsilon_0$ the metric $\gamma_{\varepsilon}$ satisfies
the following on \(\bS^2\times[0,1]\)
 \begin{enumerate}[label=\rm(\roman*)]
        \item The mean curvature $H(t)$ of the sphere $\bS^2\times\{t\}$ is
positive for all $t\in [0, 1]$.  Moreover, $H(0)=H$ and $H(t)\to 0$ as
$\varepsilon\to 0$ and $t\to 1$.
  \item The scalar curvature $R_{\gamma_{\varepsilon}}$ is positive.
    \end{enumerate}
\end{lemma}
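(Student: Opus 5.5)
The plan is to compute the mean curvature of the slices and the scalar curvature of $\gamma_\varepsilon$ explicitly from the warped-product-with-lapse structure $\gamma_\varepsilon = h(t) + f^2dt^2$, where $h=\eta g(t)$ and $f=\Phi u$. We then check positivity region by region in $t$, using the scaling $A=\varepsilon^{-5/2}$ to absorb error terms.

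\emph{Mean curvature.} The unit normal is $\nu=(\Phi u)^{-1}\partial_t$. By Lemma \ref{path lemma}(iv), $\tr_g \dot g=0$, so
\[ H(t)=\frac{1}{2\Phi u}\tr_{\eta g}\partial_t(\eta g)=\frac{\partial_t\eta}{\eta\,\Phi u}, \]
with
\[ \partial_t\eta=\varepsilon H\big(\psi' u+\psi\,\partial_t u\big)+2\varepsilon^{1/2}t. \]
At $t=0$ we have $\eta=1$, $\psi'(0)=1$, $\psi(0)=0$ and $\Phi(0)=\varepsilon$, so $H(0)=\varepsilon uH/(\varepsilon u)=H$.

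For positivity, fix $\delta>0$ with $\psi(t)=t$ on $[0,\delta]$. On $[0,\delta]$ we use $H>0$ to get $\partial_t\eta\ge \varepsilon u H-C\varepsilon t>0$ after shrinking $\delta$ (independently of $\varepsilon$). On $[\delta,1]$ the term $2\varepsilon^{1/2}t\ge2\varepsilon^{1/2}\delta$ dominates the $O(\varepsilon)$ terms once $\varepsilon$ is small. Finally, at $t=1$ we have $H(1)=2\varepsilon^{1/2}/\big((1+\varepsilon^{1/2})\Phi u\big)\to0$, since moreover $\Phi\to\infty$.

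\emph{Scalar curvature.} I will use
\[ R_\gamma=R_h-2f^{-1}\Delta_hf-(|A|^2+H^2)-2f^{-1}\partial_tH. \]
Since $\Phi$ is independent of $x$, and $R_h=2\eta^{-1}(K_{g}-\tfrac12\Delta_{g}\log\eta)$, $\Delta_h=\eta^{-1}\Delta_g$, this becomes
\[ R_\gamma=\frac{2}{\eta}\,\frac{-\Delta_gu+K_gu}{u}-\frac{1}{\eta}\Delta_g\log\eta-(|A|^2+H^2)-\frac{2}{\Phi u}\partial_t H , \]
and the first term equals $2\lambda(t)/\eta$. Since $\varepsilon^{1/2}t^2$ is $x$-independent, the error $\Delta_g\log\eta$ is $O(\varepsilon\psi)\le C\varepsilon t$. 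This is beaten by $2\lambda(t)/\eta\ge \alpha t$ in the degenerate case of Lemma \ref{path lemma}(v), and trivially in the case $\lambda_1(g)>0$, once $\varepsilon$ is small.

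The extrinsic terms are controlled as follows. $|A|^2+H^2=O\big((|\partial_t\eta|+|\dot g|)^2/\Phi^2\big)$, and
\[ -\frac{2}{\Phi u}\partial_tH=\frac{2\Phi'\,\partial_t\eta}{\eta\Phi^3u^2}-\frac{2\,\partial_t^2\eta}{\eta\Phi^2u^2}+\text{(terms with }\partial_tu,\partial_t\eta^2\text{)}\big/\Phi^2 . \]
Because $\Phi'\ge0$ and $\partial_t\eta>0$ (from the first step), the first term on the right is nonnegative.

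\emph{Case split.} The main obstacle is $t$ near $0$. There $\Phi\approx\varepsilon$ is tiny, so the negative terms are of size $\varepsilon^{-2}\cdot O(\varepsilon+\varepsilon^{1/2})$, while $\lambda(t)$ may vanish at $t=0$. On $[0,1/4]$ I would use the good term
\[ \frac{2A\,\partial_t\eta}{\Phi^3u^2}\gtrsim \frac{\varepsilon^{-5/2}\,(\varepsilon+\varepsilon^{1/2}t)}{(At+\varepsilon)^3}, \]
and compare it against the bad terms, which are bounded by $C(\varepsilon+\varepsilon^{1/2}+|\dot g|^2)/(At+\varepsilon)^2$. The ratio is $\gtrsim A(\varepsilon+\varepsilon^{1/2}t)/\big((At+\varepsilon)(\varepsilon^{1/2}+|\dot g|^2)\big)$. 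This is large for $At\lesssim 1$. For $At\gtrsim1$, the bad terms are instead $O(\varepsilon^{5}/t^2)\cdot O(1)$, which must be absorbed by $\lambda(t)/\eta\ge\alpha t/2$ for $t\ge \varepsilon^{5/2}$. This gives $\varepsilon^5 t^{-2}\ll t$, which holds since $t^3\ge\varepsilon^{15/2}\gg\varepsilon^5$ fails; so in the intermediate range $\varepsilon^{5/2}\le t\le\varepsilon^{5/3}$ one must again use the $\Phi'$ term. There $\Phi'=A$ still and the ratio $A\varepsilon^{1/2}t/(At)\cdot\varepsilon^{-1/2}\sim1$, so the constants need checking. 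This balancing, which is presumably why the exponent $5/2$ was chosen, is the step I expect to require the most care.

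On $[1/4,1/2]$, $\Phi\ge A/4$, so all bad terms are $O(\varepsilon^5)$, against $\lambda\ge\alpha/4$. On $[1/2,1]$ we have $\dot g=0$ and $\Phi$ constant, and the same bound applies. Choosing $\varepsilon_0$ small enough in each region gives $R_{\gamma_\varepsilon}>0$.
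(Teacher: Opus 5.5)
\textbf{Verdict: gap at the crux.} Your framework is the same as the paper's, and your mean-curvature argument is fine. The intrinsic part satisfies $2\lambda(t)/\eta-\eta^{-1}\Delta_g\log\eta\ge 2\lambda(t)/\eta-C\varepsilon t$. The extrinsic part is an $\varepsilon$-independent bad contribution $\ge -C/(\Phi^2u^2)$, dominated by $|\dot g|^2$, plus the good term $2\Phi'\partial_t\eta/(\eta\Phi^3u^2)$.

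However, you do not actually establish positivity of $R$ near $t=0$. After splitting at $At\sim 1$, you find that the intrinsic term only wins for $t\gg\varepsilon^{5/3}$. On $\varepsilon^{5/2}\lesssim t\lesssim\varepsilon^{5/3}$ you estimate the good/bad ratio as $\sim 1$ and stop at ``the constants need checking.'' That estimate is wrong:
\begin{itemize}
\item It keeps only the $2\varepsilon^{1/2}t$ part of $\partial_t\eta$. When $\Phi\approx At$, its good contribution $4\varepsilon^{1/2}/(\eta A^2t^2u^2)$ cancels to leading order against $-2\partial_t^2\eta/(\eta\Phi^2u^2)$ coming from the same $\varepsilon^{1/2}t^2$ term. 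It also cannot beat the $O(1)$ term $|\dot g|^2/\Phi^2$.
\item It discards the part $\varepsilon H(u+tu_t)$ of $\partial_t\eta$ coming from $\psi$, which is precisely what the construction relies on.
\end{itemize}

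The missing step is the one the paper uses. On all of $[0,1]$ one has $\partial_t\eta\ge c\,\varepsilon$: on $[0,\delta]$ this follows from $H>0$, $u>0$ and $\psi=t$, and on $[\delta,1]$ from $2\varepsilon^{1/2}\delta$. This is exactly what you already proved for mean convexity. With it, the argument closes in two regimes:
\begin{itemize}
\item For $t\le\varepsilon^{3/2}$: on $[0,1/4]$ the good term divided by the bad bound is at least $c\,A\varepsilon/(At+\varepsilon)\gtrsim\min(A,\varepsilon/t)\ge\varepsilon^{-1/2}$. So it is large on the whole range $t\le\varepsilon^{3/2}$, not only for $At\lesssim 1$. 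The intrinsic part is nonnegative there.
\item For $t\ge\varepsilon^{3/2}$: here $\Phi\ge A\varepsilon^{3/2}=\varepsilon^{-1}$, so the bad terms are $O(\varepsilon^2)$. Meanwhile $2\lambda(t)/\eta-C\varepsilon t\ge c\,t\ge c\,\varepsilon^{3/2}$ by Lemma \ref{path lemma}(v), and $\Phi'\ge 0$ keeps the good term nonnegative.
\end{itemize}
The two mechanisms overlap comfortably on $\varepsilon^{5/3}\ll t\ll\varepsilon$, so no delicate balancing of constants is needed. Splitting at $t=\varepsilon^{3/2}$ completes the proof exactly as in the paper, and your separate treatment of $[1/4,1]$ is then subsumed by the second regime.
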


\begin{proof} For convenience in the proof we will write
$$\gamma_{\varepsilon}=  h(x, t)+ v(x, t)^2\,dt^2$$
 where  $h(x, t)= \eta(x,t)\,g(x, t)$ and $v(x, t)=\Phi(t)u(x, t)$.  We also
drop the subscript in $\gamma_{\varepsilon}$ and simply write $\gamma$ where
there is no confusion.   Finally, when describing dependencies of various
constants and quantities on the items above, we will ignore possible
dependencies on the functions $\psi$ and $ \varphi$.

\subsection*{Mean Curvature}
   The mean curvature $H(t)$ of the sphere $\bS^2\times\{t\}$ relative to
$\gamma$ is given by the following formula where $\nu=\frac{1}{v} \partial_t$
denotes the unit normal vector field on $\bS^2\times\{t\}$ in the positive $t$
direction.
  \[H(x, t)=\frac{1}{2}\tr_h(\mathcal{L}_\nu h) =\frac{\tr_h
h_t}{2v}=\frac{1}{2\eta}\frac{tr_g (\eta_t g+\eta g_t)}{\Phi
u}=\frac{1}{\eta}\frac{\varepsilon H(x)(u_t \psi + u \psi_t) +
2\varepsilon^{1/2} t }{\Phi u}\]
where we have used the fact that \(0=\frac{d}{dt}dA_g=\frac{1}{2}\tr_g g_t\)
by Jacobi's formula.  The second statement of part (i) of the Lemma follows
immediately using the definitions of $\eta$ and $\Phi$.   On the other hand,
recall $u(x, t)>0$ for all $(x, t)$  while $\psi(t)=t$ on $t\in [0, \delta]$
for some $\delta$ giving $$\varepsilon H(x)(u_t \psi + u \psi_t)
+2\varepsilon^{1/2} t > \varepsilon H(x) u_t  t+2\varepsilon^{1/2} t > 0$$
for $t\in [0, \delta]$ and $\varepsilon>0$ sufficiently small depending on $g,
H$.    Meanwhile we have
$$\varepsilon H(x)(u_t \psi + u \psi_t) +2\varepsilon^{1/2} t >0$$
 for $t\in [\delta, 1]$ and $\varepsilon>0$ sufficiently smaller still
depending on $g, H$.  The first statement in part (i) of the Lemma follows.

\subsection*{Scalar Curvature}
Recall that $g, H$ are as in Theorem \ref{mainthm}.  Thus  $\lambda_1(g)\geq 0$
and $H$ is positive.

  The scalar curvature of the warped product metric \(\gamma\) is given by the
following formula (see for example \cite{Mantoulidis_2015})
    \begin{equation}\label{scalar curvature}
\begin{split}
        R=&\left[2K_h-2v^{-1}\Delta_h v\right]+v^{-2}\left[-\tr_h
h_{tt}-\frac{1}{4}(\tr_h h_t)^2+\frac{v_t}{v}\tr_h
h_t+\frac{3}{4}\abs{h_t}_h^2\right]\\
=:& I +II
\end{split}
    \end{equation}
In the following, $C$ and $C_i$ (for each $i$) will denote a positive constants
depending
only on $g, H$ and may differ from line to line.

We may estimate

\begin{equation}\label{E1}
\begin{split}
  I&=\eta^{-1}(2K_g- \Delta_g \log \eta) -2v^{-1}\Phi\Delta_h u\\
&=2\eta^{-1}(K_g- u^{-1}\Delta_g u  -\frac{1}{2}\Delta_g \log \eta)\\
&\geq 2\eta^{-1} (\lambda(t) - C_0 \varepsilon t - C_0 \varepsilon^2 t^2 )   \\
&\geq C_1 t  \\
\end{split}
\end{equation}
where in the third line we have used that $\eta=(1+\varepsilon  u(x, t)H(x)
\psi(t) +\varepsilon^{1/2} t^2)$ and in the fourth line we have used that
$\eta$ is bounded above and below by positive constants depending only on $g,
H$, Lemma
\ref{path lemma} (v) and we have assumed $\varepsilon$ is suffciently small
depending on $g, H$.

\begin{equation}\label{E2}
\begin{split}
II=&v^{-2}\left[ (-\tr_h
h_{tt}-\frac{1}{4}(\tr_h h_t)^2+\frac{3}{4}\abs{h_t}_h^2)   +\frac{v_t}{v}\tr_h
h_t\right]\\
=&v^{-2}\left[ \left(-\tr_h h_{tt}-\frac{1}{4}(\tr_h
h_t)^2+\frac{3}{4}\abs{h_t}_h^2)   +\frac{u_t}{u}\tr_h h_t \right)
+\frac{\Phi_t}{\Phi}\tr_h h_t\right]\\
=&v^{-2}\left[ \left(-\tr_h h_{tt}-\frac{1}{4}(\tr_h
h_t)^2+\frac{3}{4}\abs{h_t}_h^2)   +\frac{u_t}{u}\tr_h h_t \right)
+\frac{\Phi_t}{\Phi}\frac{1}{\eta} \tr_g (\eta_t g + \eta g_t)\right]\\
=&v^{-2}\left[ \left(-\tr_h h_{tt}-\frac{1}{4}(\tr_h
h_t)^2+\frac{3}{4}\abs{h_t}_h^2)   +\frac{u_t}{u}\tr_h h_t \right)
+\frac{\Phi_t}{\Phi} \frac{2 \eta_t}{\eta} \right]\\
\geq& v^{-2}\left[-C_2+C_3 \frac{\Phi_t}{\Phi}\varepsilon\right]\\
 \end{split}
\end{equation}
Here we have estimated $\left(-\tr_h h_{tt}-\frac{1}{4}(\tr_h
h_t)^2+\frac{3}{4}\abs{h_t}_h^2+\frac{u_t}{u}\tr_h h_t\right)\geq-C_2$ using
$tr_h=\frac{1}{\eta}\tr_g$, the boundedness of $\eta$ noted above and that
derivatives (in space or time) of $h=\eta g$ are bounded (relative to $g$)
depending only on $g,H$.
We have also estimated $\eta_t=(\varepsilon H(x)(u_t\psi+ u \psi_t)
+2\varepsilon^{1/2} t) \geq C\varepsilon $ for some $C$. To see this we recall
that $\psi(t)=t$ for sufficiently small $t$ giving

$$\eta_t=(\varepsilon H(x)( t u_t + u ) +2\varepsilon^{1/2} t) \geq
C\varepsilon$$
for $t\in [0, \delta]$ for $\delta>0$ depending on $\psi, g, H$, using the
positivity of $u(x, t)$ and $H(x)$.  Meanwhile we also have

$$\eta_t=(\varepsilon H(x)(u_t \psi + u \psi_t) +2\varepsilon^{1/2} t) \geq
C\varepsilon$$
 for $t\in [\delta, 1]$ provided $\varepsilon$ is sufficiently small depending
on $g, H$.

Now we freeze the constants $C_1, C_2, C_3$ as they last appear above.

 Then for $t\in [0, \varepsilon^{3/2}]$ we can estimate as follows where in the
last inequality we have chosen $\varepsilon$ sufficiently small

\begin{equation}
\begin{split}
\frac{\Phi_t}{\Phi}(t)  \geq \frac{\Phi_t}{\Phi}(\varepsilon^{3/2})   &=
A/(A\varepsilon^{3/2}+ \varepsilon)\\
   &= 1 /(\varepsilon^{3/2}+ \varepsilon^{7/2})\\
&\geq C_2/(C_3 \varepsilon)\\
\end{split}
\end{equation}
and combining this with \eqref{E1}, \eqref{E2} gives $R\geq 0$  when $t\in [0,
\varepsilon^{3/2}]$.

For $t\in [\varepsilon^{3/2}, 1]$ we can continue the estimate in \eqref{E1} as
\begin{equation}\label{E9}
I \geq v^{-2}\left[ C_1 v^2  \varepsilon^{3/2} \right]  \geq v^{-2} \left[ C_1
(\varepsilon^{-5/2}\varepsilon^{3/2} +\varepsilon)^2 \varepsilon^{3/2}  \right]
\geq v^{-2}\left[ C_1 \varepsilon^{-1/2}\right]
\end{equation}
and combining this with \eqref{E2}, the fact that $\dot \Phi \geq 0$ and
choosing $\varepsilon$ sufficiently
smaller still if necessary gives $R\geq 0$ for $t\in [\varepsilon^{3/2}, 1]$.

This completes the proof of part (ii) of the Lemma.

\end{proof}

\section{Collar Extension of Theorem 1.2}
In this section, fix Bartnik boundary data \((\bS^2,g,H)\) as in
\thref{secondthm}. We construct a different collar metric which is still based
on the ansatz used in \cite{Mantoulidis_2015}.
\par For each \(0<\varepsilon\ll 1\), consider the following
\begin{itemize}
    \item A path \(g(x,t)\) from \thref{path lemma} with \(\lambda(t)\geq
\lambda>0\).
    \item A smooth map \(\phi:[0,1]\to[0,1]\) such that \(\phi(0)=0\),
\(\phi_t(0)=1\), \(\abs{\phi}\leq \varepsilon^2\), \(\abs{\phi_t}<3\),
\(\phi_{tt}<1\), and \(\phi(t)=0\) for \(t\in[1/2,1]\).\\The existence of such
\(\phi\) is provided in the Appendix.
\end{itemize}
Then we consider the smooth metric on \(\bS^2\times[0,1]\) given by
\[\rho_\varepsilon(x,t)=\eta(x,t)g(x,t)+\varepsilon^{-2}u^2(x,t)dt^2\]
where
\[\eta(x,t)=1+\varepsilon^{-1}u(x,0)H(x)\phi(t)+\varepsilon^{3/2} t^2\]

\begin{lemma}\thlabel{collar lemma 2}
  There exists $0<\varepsilon_0\ll 1$ such that for
all  $\varepsilon\leq\varepsilon_0$ the metric $\rho_{\varepsilon}$ satisfies
the following on \(\bS^2\times[0,1]\)
 \begin{enumerate}[label=\rm(\roman*)]
        \item The mean curvature \(H(t)\) satisfies \(H(0)=H\) and \(H(t)\to
0\) as \(\varepsilon\to 0\) and \(t\to 1\).
  \item The scalar curvature $R_{\rho_{\varepsilon}}$ is positive.
  \item The boundary \(\{t=0\}\) is outer-minimizing.
    \end{enumerate}
\end{lemma}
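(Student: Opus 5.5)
I will follow the same scheme as in the proof of \thref{collar lemma 1}: write $\rho=h+v^2dt^2$ with $h=\eta g$ and $v=\varepsilon^{-1}u$, and then handle the mean curvature and the scalar curvature through the warped-product formulas.

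\textbf{Mean curvature.} As before, $\tr_g g_t=0$ by \thref{path lemma}(iv), so
\[
H(x,t)=\frac{\tr_h h_t}{2v}=\frac{\varepsilon\,\eta_t}{\eta\,u}=\frac{u(x,0)H(x)\phi_t(t)+2\varepsilon^{5/2}t}{\eta(x,t)\,u(x,t)}.
\]
At $t=0$ we have $\eta=1$ and $\phi_t(0)=1$, which gives $H(0)=H$. Since $\phi=0$ on $[1/2,1]$, we get $H(t)=2\varepsilon^{5/2}t/(\eta u)\to 0$ there. We also need $\eta$ bounded above and below uniformly: this holds because $\varepsilon^{-1}\abs{\phi}\le\varepsilon$, so $\eta=1+O(\varepsilon)$.

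\textbf{Scalar curvature.} I split $R=I+II$ as in \eqref{scalar curvature}. For $I$, the same computation as in \eqref{E1} gives
\[
I=2\eta^{-1}\Big(K_g-u^{-1}\Delta_g u-\tfrac12\Delta_g\log\eta\Big)\ge 2\eta^{-1}\big(\lambda-C\varepsilon\big)\ge C_1>0 .
\]
This uses \thref{path lemma}(v) with $\lambda(t)\ge\lambda>0$, together with the fact that $\log\eta$ has spatial derivatives of size $O(\varepsilon^{-1}\abs{\phi})=O(\varepsilon)$.

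For $II=v^{-2}[\cdots]=\varepsilon^2u^{-2}[\cdots]$, the bracket is a sum of terms in $h_t$ and $h_{tt}$. Now $h_t=\eta_t g+\eta g_t$ with $\eta_t=O(\varepsilon^{-1})$ (because $\abs{\phi_t}<3$), and $h_{tt}$ contains $\eta_{tt}g$ with $\eta_{tt}=\varepsilon^{-1}uH\phi_{tt}+O(\varepsilon^{3/2})$. The crucial term is $-\tr_h h_{tt}\approx -2\eta^{-1}\eta_{tt}$. Since $\phi_{tt}<1$, it is bounded below by $-2\varepsilon^{-1}u(x,0)H(x)-C$, which is where $H\ge0$ enters. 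The quadratic terms $-\tfrac14(\tr_h h_t)^2+\tfrac34\abs{h_t}_h^2$ combine, for $h_t$ pure trace plus the traceless part $g_t$, to $-\tfrac14(2\eta_t/\eta)^2+\tfrac34\cdot 2(\eta_t/\eta)^2+O(\abs{\eta_t})=\tfrac12(\eta_t/\eta)^2+O(\varepsilon^{-1})\ge -C\varepsilon^{-1}$. The term $\frac{v_t}{v}\tr_h h_t=\frac{u_t}{u}\cdot\frac{2\eta_t}{\eta}$ is $O(\varepsilon^{-1})$. Hence the bracket is at least $-C\varepsilon^{-2}$, and $II\ge -C\varepsilon^{2}\cdot\varepsilon^{-2}$. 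I expect this to be the main obstacle: the bound $-C\varepsilon^{-2}$ alone gives only $II\ge -C$, which does not beat $C_1$. So I must track the leading terms more carefully. The $\varepsilon^{-2}$ contributions are $\tfrac12(\eta_t/\eta)^2\ge0$ and the $O(\varepsilon^{-1})$ terms, so in fact the bracket is $\ge -C\varepsilon^{-1}$, and then $II\ge -C\varepsilon\ge -C_1/2$ for $\varepsilon$ small. This yields $R\ge C_1/2>0$. I will verify that no $\varepsilon^{-2}$ term has a negative sign. The only candidates are the square terms, and their combination has positive coefficient $\tfrac12$, so I believe this works.

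\textbf{Outer-minimizing.} I would show that the leaves $\bS^2\times\{t\}$ have $H(t)\ge0$; this holds where $\phi_t\ge0$, but $\phi_t$ may change sign. So instead I will use a calibration-type argument: the unit normal field $\nu=v^{-1}\partial_t$ has divergence equal to the leaf mean curvature, and the area-monotonicity argument requires $\operatorname{div}\nu\ge0$. Since $\phi_t$ is not sign-definite, I would rather argue directly. Let $\Sigma$ enclose $\{t=0\}$ inside the collar, and extend the collar by the Schwarzschild exterior, which is foliated by mean convex spheres outside. The area form $dA_{h(t)}=\eta\,dA_{g(t)}$ and $dA_g$ is $t$-independent. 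Projecting along $t$ onto $\{t=0\}$ is area-decreasing up to the factor $\eta(x,t)/\eta(x,0)$, and $\eta(x,t)\ge 1-\varepsilon^{-1}uH\abs{\phi}\ge 1-C\varepsilon$. Together with the length $\varepsilon^{-1}u$ of the $t$-direction, this gives $\abs{\Sigma}\ge\abs{\bS^2}_g$ when $\Sigma$ lies in the region where $\eta\ge 1$. Alternatively I would choose $\phi\ge0$ with the appendix construction, so that $\eta\ge\eta(\cdot,0)=1$ everywhere, and the projection argument then gives outer-minimizing directly.
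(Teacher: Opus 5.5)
Your proposal is correct and follows the paper's proof essentially step for step. The shared ingredients are: the same mean-curvature formula; the bounds $I\geq 2\eta^{-1}(\lambda-C\varepsilon)$ and $-C\varepsilon^{-1}$ for the bracket in $II$, where $\phi_{tt}<1$ together with $H\geq 0$ controls $-\tr_h h_{tt}$, and your exact identity $-\tfrac14(\tr_h h_t)^2+\tfrac34\abs{h_t}_h^2=\tfrac12(\eta_t/\eta)^2+\tfrac34\abs{g_t}_g^2\geq 0$ replaces the paper's Cauchy--Schwarz step; and a projection argument for outer-minimization that is the paper's calibration by $\pi^*dA_{g(0)}$ in other words. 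Your final choice $\phi\geq 0$ is exactly what the paper tacitly uses when it asserts $\eta\geq 1$ (the appendix construction does give $\phi\geq 0$), so you can drop your earlier attempts via mean-convex leaves or via $\eta\geq 1-C\varepsilon$.
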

\begin{proof}
    For convenience we write \(\rho_\varepsilon=h(x,t)+v(x,t)^2dt^2\).
    \subsection*{Mean Curvature} By similar calculation from the last section,
    \begin{equation}\label{mean curvature 2}
        H=\frac{\tr_h h_t}{2v}=\frac{1}{2\eta}\frac{\tr_g(\eta_tg+\eta
g_t)}{\varepsilon^{-1}
u}=\frac{1}{\eta}\frac{\varepsilon^{-1}u_0H\dot\phi+2\varepsilon^{3/2}
t}{\varepsilon^{-1} u}
    \end{equation}
    which satisfies \(H(x,0)=\frac{H(x)\dot\phi(0)}{\eta(x,0)}=H(x)\). Since
\(\phi(t)=0\) for \(t\in[1/2,1]\), \(H(t)\to 0\) as \(t\to 1\) and
\(\varepsilon\to 0\). This proves (i).
    \subsection*{Scalar Curvature}
    We again have
    \begin{equation}
\begin{split}
        R=&\left[2K_h-2v^{-1}\Delta_h v\right]+v^{-2}\left[-\tr_h
h_{tt}-\frac{1}{4}(\tr_h h_t)^2+\frac{v_t}{v}\tr_h
h_t+\frac{3}{4}\abs{h_t}_h^2\right]\\
=:& I +II
\end{split}
    \end{equation}
Again, we use \(C_i\) to denote positive constants depending only on \(g\) and
\(H\) which may differ line to line.
\begin{align*}
    I&=\eta^{-1}(2K_g-\Delta_g\log\eta)-2v^{-1}\Delta_h u\\
    &=2\eta^{-1}(K_g-u^{-1}\Delta_g u-\frac{1}{2}\Delta_g\log\eta)\\
    &\geq 2\eta^{-1}(\lambda(t)-C_0\varepsilon^{-1}\abs{\phi})\\
    &\geq 2\eta^{-1}\lambda-C_0\varepsilon
\end{align*}
Here we used the fact that \(\eta\) is bounded only depending on \(g\) and
\(H\).
\par For term II, using the bounds on \(\phi\) and its derivatives, we have
\begin{align*}
    II&=v^{-2}\left[-\tr_h
h_{tt}-\frac{1}{4}(\tr_h h_t)^2+\frac{v_t}{v}\tr_h
h_t+\frac{3}{4}\abs{h_t}_h^2\right]\\
&\geq v^{-2}\left[-\tr_h
h_{tt}+\frac{v_t}{v}\tr_h h_t+\frac{1}{4}\abs{h_t}_h^2\right]\\
&\geq v^{-2}\left[-C_1\varepsilon^{-1}-C_2\right]\\
&\geq -C_1\varepsilon-C_2\varepsilon^2
\end{align*}
Then, it is clear that for sufficiently small \(\varepsilon\)
\[R=I+II=2\eta^{-1}\lambda-(C_0+C_1)\varepsilon-C_2\varepsilon^2>0\]
\subsection*{Outer-Minimizing}
We use a calibration argument. Let \(\pi:\bS^2\times[0,1]\to \bS^2\) be the
projection onto \(\{t=0\}\), and consider \(\omega=\pi^*dA_{g(0)}\).
    \par Given \((x,t)\in\bS^2\times[0,1]\), the tangent map
\(\pi_*:T_{(x,t)}(\bS^2\times[0,1])\to T_{x}\bS^2\) kills vectors in the
\(t\)-direction, so the supremum
    \[\norm{\omega}^*=\sup\{\omega(v,w):v,w\in
T_{(x,t)}(\bS^2\times[0,1]),\abs{v\wedge w}=1\}\]
    is attained by some orthonormal pair \(v,w\) tangent to
\(\{t=\text{constant}\}\). In particular,
\(\eta^{1/2}(x,t)\pi_*v,\eta^{1/2}(x,t)\pi_*w\) are \(g(t)\)-orthonormal, hence
    \[\omega(v,w)=dA_{g(0)}(\pi_*v,\pi_*w)=dA_{g(t)}(\pi_*v,\pi_*w)=\pm\eta^{-1}(x,t)\]
    Since \(\eta\geq 1\), this shows \(\norm{\omega}^*\leq 1\) so given any
surface \(\Sigma\) enclosing \(\{t=0\}\)
    \[\int_{\bS^2\times\{0\}}dA_{g(0)}=\int_{\bS^2\times\{0\}}\omega=\int_\Sigma\omega\leq\int_\Sigma\norm{\omega}^*dA_\Sigma\leq\int_\Sigma
dA_{\Sigma}\]
    where we used the closedness of \(\omega\), concluding the proof.
\end{proof}

\section{Gluing Exterior Schwarzschild Region and Proof of Theorems}
\subsection{Proof of \thref{mainthm}}
Fix Bartnik boundary data \((\bS^2,g,H)\) as in Theorem \ref{mainthm}.   Fix
some  metric $\gamma_{\varepsilon}$ on $\bS^2\times[0, 1]$  from Lemma
\ref{collar lemma 1}.   Recall the notation $$\gamma_{\varepsilon} =  h(x, t)+
v^2(x, t)\,dt^2:= \eta(x,t)\,g(x, t)+ \Phi(t)^2 u^2(x, t)\, dt^2$$ where $\eta,
\Phi$ are defined above the proof of Lemma \ref{collar lemma 1}.

  The Hawking mass of the triple $(\bS^2, h(1),
H(1))$ is defined as

\[
\m_H(\bS^2, h(1), H(1))
: =
\sqrt{\frac{|\bS^2|_{h(1)}}{16\pi}}
\left(
1-\frac{1}{16\pi}\int_{\bS^2} H(1)^2\, d\sigma
\right)=:I\cdot II
\]
It is clearly seen that $\eta(x,
1)\to 1$ as $\varepsilon \to 0$.   Combining
this with Lemma \ref{path lemma} (iv) gives $I\to
\sqrt{\frac{|\bS^2|_{g(0)}}{16\pi}}$ as $\varepsilon \to 0$, while combining
with Lemma \ref{collar lemma 1} (i) gives $II \to 1$ as $\varepsilon \to 0$.
In
summary, we have

\begin{equation}\label{BartnikvsHawking}
\m_H(\bS^2, h(1), H(1)) \to  \sqrt{\frac{|\bS^2|_{g(0)}}{16\pi}} \text{ as }
\varepsilon \to 0\end{equation}

\begin{prop}[{\cite[Proposition 2.1]{Cabrera2017}}]\thlabel{gluingprop}
Consider a metric
\[
\gamma = ds^2 + f(s)^2 g_*
\]
on $[a,b]\times \bS^2$, where $g_*$ is the standard metric on $\bS^2$ and
$f>0$ is a smooth function on $[a,b]$. Suppose that

\begin{enumerate}
\item $\gamma$ has positive scalar curvature;
\item $\Sigma_b := \{b\}\times \bS^2$ has positive mean curvature; and
\item $\m_H(\Sigma_b)\geq 0$.
\end{enumerate}

Then, for any $m_e > \m_H(\Sigma_b)$, there exists a smooth,
rotationally symmetric, asymptotically flat Riemannian $3$-manifold
$(M,\gamma)$ with boundary $\partial M$ and non-negative scalar curvature
such that

\begin{enumerate}
\item $M$, outside a compact set, is isometric to a spatial
Schwarzschild manifold of mass $m_e$;

\item $\partial M$ has a neighborhood $U$ that is isometric to
\[
(\left[a,\frac{a+b}{2}\right]\times S^2,\gamma);
\]

\item if $f' > 0$ on $[a,b]$, then $M$ can be constructed so that every
rotationally symmetric sphere in $M$ has positive constant mean
curvature.
\end{enumerate}
\end{prop}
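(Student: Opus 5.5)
The plan is to work in the rotationally symmetric class and use the Hawking mass of the coordinate spheres as the unknown. For $\gamma=ds^2+f(s)^2g_*$ the standard formulas are
\[
R_\gamma=\frac{2}{f^2}\bigl(1-f'^2-2ff''\bigr),\qquad H(s)=\frac{2f'}{f},\qquad \m(s):=\m_H(\{s\}\times\bS^2)=\frac{f}{2}\bigl(1-f'^2\bigr).
\]
Differentiating the last one gives the monotonicity identity
\[
\m'(s)=\frac{f'}{2}\bigl(1-f'^2-2ff''\bigr)=\frac{f^2f'}{4}R_\gamma .
\]
Two consequences drive the argument. First, wherever $f'>0$, we have $R_\gamma\ge 0$ exactly when $\m$ is nondecreasing. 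Second, wherever $f'>0$, $f$ solves $f'=\sqrt{1-2\m(s)/f}$. When $\m\equiv m_e$ this ODE is precisely the radial equation of a mass-$m_e$ Schwarzschild metric written in arclength, with $r=f$.

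\textbf{Choosing where to cut.} Hypothesis (2) gives $f'(b)>0$, so by continuity there is $c\in((a+b)/2,b)$ with $f'>0$ on $[c,b]$. We keep $f$ unchanged on $[a,c]$. This already gives conclusion (2), since $[a,(a+b)/2]\subset[a,c]$. On $[c,b]$ we have $R_\gamma>0$ and $f'>0$, so $\m$ is increasing there. Hence $\m(c)\le\m(b)=\m_H(\Sigma_b)<m_e$.

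\textbf{Extending past $c$.} Choose a smooth nondecreasing function $\mu$ on $[c,\infty)$ with three properties: $\mu=\m$ on $[c,c+\delta]$; $\mu$ stays equal to its value $\mu(c+\delta)\le\m(b)$ on a long interval $[c+\delta,s_1]$; and $\mu$ then rises monotonically to $m_e$ on $[s_1,s_2]$ and equals $m_e$ afterwards. Solve $f'=\sqrt{1-2\mu/f}$ with initial data $f(c)$.

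\begin{itemize}
\item Smooth matching: on $[c,c+\delta]$ the original $f$ solves the same ODE, so by uniqueness the new $f$ agrees with the old one near $c$. The glued function is therefore smooth.
\item Nonnegative scalar curvature: on $[c,\infty)$ we have $f'>0$ and $\mu'\ge0$, so the identity above gives $R\ge 0$ there. On $[a,c]$ the original positivity is untouched.
\item Schwarzschild end: on $[s_2,\infty)$ we have $\mu\equiv m_e$, so the metric is isometric to a mass-$m_e$ Schwarzschild exterior $r>f(s_2)$. That region is asymptotically flat with ADM mass $m_e$, giving conclusion (1).
\end{itemize}

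\textbf{Main obstacle: keeping the ODE nondegenerate.} We need $f>2\mu$ throughout, so that $f'$ stays real and positive. At $c$ this holds because $f'(c)\in(0,1]$ forces $1-2\m(c)/f(c)=f'(c)^2>0$. While $\mu$ is constant and $f>2\mu$, $f'$ is bounded below by a positive constant and $f$ grows without bound. So we choose $s_1$ large enough that $f(s_1)>4m_e$. After that, raising $\mu$ up to $m_e$ keeps $1-2\mu/f\ge 1/2$, since $f$ only increases.

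One detail is the endpoint case $f'(c)=1$, where $\m(c)=0$. If $\mu$ were to increase, $f'$ would stay in $(0,1]$ anyway. If $f'$ exceeds $1$ (negative mass), that is also harmless: the square root stays positive. A negative $\m$ only makes $1-2\mu/f$ larger, so the same argument covers it.

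\textbf{Conclusion (3).} Suppose $f'>0$ on all of $[a,b]$. Then we may take $c$ anywhere past $(a+b)/2$, and $f'>0$ holds on the whole manifold. Every rotationally symmetric sphere then has mean curvature $2f'/f>0$, which is constant on each sphere by symmetry.
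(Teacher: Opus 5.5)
Your argument is correct, and it necessarily takes a different route from the paper, because the paper gives no proof here: it quotes the statement from \cite{Cabrera2017} and uses it as a black box in \S4. You give a self-contained proof by treating the Hawking mass $\m(s)=\frac f2(1-f'^2)$ of the coordinate spheres as the unknown. The identity $\m'=\frac14 f^2f'R_\gamma$ turns $R_\gamma\ge 0$, wherever $f'>0$, into monotonicity of a prescribed profile $\mu$. The ODE $f'=\sqrt{1-2\mu/f}$ then reconstructs the warping function. Uniqueness gives the smooth match with the original $f$ near the cut point $c$, and $\mu\equiv m_e$ is literally Schwarzschild. Besides being elementary, this makes the role of each hypothesis visible. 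Hypothesis (2) produces a cut point with $f'>0$ on $[c,b]$. Hypothesis (1) then makes $\m$ increasing there, so $\m(c)\le\m_H(\Sigma_b)<m_e$. Hypothesis (3) plays no role beyond guaranteeing $m_e>0$; your construction would run unchanged for a target mass $m_e\le 0$. The citation buys brevity, while your version buys an explicit, checkable construction.

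One step needs a small repair. As written, taking $\mu=\m$ on $[c,c+\delta]$ and then $\mu\equiv\mu(c+\delta)$ on $[c+\delta,s_1]$ is not smooth, since $\m'(c+\delta)=\frac14f^2f'R_\gamma>0$. Insert a transition instead: set $\mu(s)=\m(c)+\int_c^s\chi\,\m'$, where $\chi$ is a cutoff equal to $1$ on $[c,c+\delta]$ and $0$ beyond $c+2\delta<b$.
\begin{itemize}
\item This $\mu$ is smooth and nondecreasing, and it is constant past $c+2\delta$ with value at most $\m(b)$.
\item It also satisfies $\mu\le\m$ on $[c,b]$, so the original $f$ is a subsolution of $f'=\sqrt{1-2\mu/f}$ with the same value at $c$.
\item Comparison then gives $f_{\mathrm{new}}\ge f$ on $[c,b]$. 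Hence $1-2\mu/f_{\mathrm{new}}>0$ throughout the transition: trivially where $\mu\le 0$, and where $\mu>0$ it is at least $1-2\m/f=f'^2$.
\end{itemize}
From there your constant and rising phases go through exactly as you describe.
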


 By the definition of $\Phi(t)$ and also Lemma \ref{path lemma}
(ii), (iii) we have that $\Phi(t)$ and $u(x, t)$ are both constant for $t\in
[1/2, 1]$.    Also by definition we have $\eta(x, t)$ is independent of $x$
for $t\in [1-\delta, 1]$ and some $0<\delta<1/2$.  Combining with Lemma
\ref{collar lemma 1} we conclude that
$\gamma$ satisfies the hypothesis of Proposition \ref{gluingprop} for the
interval $[a, b]=[1-\delta, 1]$,  and thus for arbitrary $m > \m_H(\bS^2, h(1),
H(1))$ we obtain an extension $(M, \gamma)$ as given in the Proposition.

 By Proposition \ref{gluingprop} and  Lemma \ref{collar lemma 1}, $(M, \gamma)$
is asymptotically flat with non-negative scalar curvature, while $\partial
M=\bS^2$ has induced metric $g$ and mean curvature $H$.  Moreover, the
foliating sphere $\bS^2\times \{t\}$ in $M$ has positive mean curvature for
each $t>0$ and the maximum principle implies that the interior of $(M, \gamma)$
contains no minimal surfaces.

Thus $(M, \gamma)$ is an admissible extension of the Bartnik data $(\bS^2, g,
H)$ with $\m_{ADM} (M, \gamma)=m$.  The statement in Theorem \ref{mainthm}
follows from noting \eqref{BartnikvsHawking} and that $\varepsilon>0$ and  $m >
\m_H(\bS^2, h(1), H(1))$  were both chosen arbitrarily.  This completes the
proof of Theorem \ref{mainthm}.

\subsection{Proof of \thref{secondthm}}
The exact same argument works for \(\rho_\varepsilon\). We only need to show
that \(\{t=0\}\) is outer-minimizing in the entire \(M\). By \eqref{mean
curvature 2}, the collar is mean-convex for some interval \([1-\delta,1]\).
Therefore, by \thref{gluingprop}, \(M\) is mean-convex beyond \(t=1-\delta\)
which means the end of the collar is outer-minimizing with respect to the
exterior. Thus \(\{t=0\}\) is outer-minimizing in the entire extension (see
\cite{McCormick_2020} Theorem 4.1).

\section{Appendix}
We construct the function \(\phi\) used in the proof of \thref{collar lemma 2}.

For \(a\geq 4\) consider
\[\varphi(t)=\begin{cases}
  -at^2+t=-a(t-\frac{1}{2a})^2+\frac{1}{4a}&t\in[0,1/2a]\\
  \frac{1}{4a}(\frac{1}{2a}-\frac{1}{4})^{-2}(t-\frac{1}{4})^2-\frac{1}{2a}(\frac{1}{2a}-\frac{1}{4})^{-3}(t-\frac{1}{4})^2(t-\frac{1}{2a})
& t\in[1/2a,1/4]\\
  0 & t\in[1/4,1]
\end{cases}\]
which satisfies \(\varphi\geq 0\), \(\varphi\in C^1\), \(\varphi(0)=0\),
\(\varphi'(0)=1\), \(\varphi\equiv 0\) on \([1/4,1]\),
\(\norm{\varphi}=\frac{1}{4a}\), \(\norm{\varphi'}=1\),
\(\sup\varphi''=\frac{3}{2a}(\frac{1}{4}-\frac{1}{2a})^{-2}\).

Let \(\chi\) be a smooth cutoff with \(\chi=1\) on \([1/4a,5/8]\), \(\chi=0\)
on \([0,1/8a]\cup[3/4,1]\). Consider
\(\phi_\epsilon=\chi(\varphi*\rho_\epsilon)+(1-\chi)\varphi\).
\par Since mollification preserves bounds: \(f\leq C\implies
f*\rho_\epsilon\leq C\), then
\begin{align*}
  \norm{\phi_\epsilon'}&=\norm{\chi'(\varphi*\rho_\epsilon)+\chi(\varphi'*\rho_\epsilon)-\chi'\varphi+(1-\chi)\varphi'}\\
  &\leq\norm{\chi'(\varphi*\rho_\epsilon-\varphi)}+\norm{\chi(\varphi'*\rho_\epsilon)}+\norm{(1-\chi)\varphi'}\\
  &\leq\norm{\chi'}\norm{\varphi*\rho_\epsilon-\varphi}+2\norm{\varphi'}\\
  &=\delta(\epsilon)+2\norm{\varphi'}
\end{align*}
\begin{align*}
  \phi''_\epsilon&\leq\norm{\chi''(\varphi*\rho_\epsilon-\varphi)}\norm{2\chi'(\varphi'*\rho_\epsilon-\varphi')}+\max[\chi(\varphi''*\rho_\epsilon)]+\max[(1-\chi)\varphi'']\\
  &\leq\delta(\epsilon)+2\max\varphi''
\end{align*}
where \(\delta(\epsilon)\to 0\) as \(\epsilon\to 0\) since \(\varphi'\) is
continuous. Then by taking \(a\) suitably large and \(\epsilon\) suitably
small, \(\phi_\epsilon\) is the desired choice of \(\phi\).


\end{document}